\newtheorem{theorem}{Theorem}[section]
\newtheorem{lemma}[theorem]{Lemma}
\theoremstyle{definition}
\newtheorem{definition}[theorem]{Definition}
\numberwithin{equation}{section}
\newcommand{\R}{\mathbb{R}}
\newcommand{\C}{\mathbb{C}}
\newcommand{\s}{\textcolor{white}{.......}}
\newcommand{\mba}{\mathbf{a}}
\newcommand{\mbb}{\mathbf{b}}
\title{A Note On Convexity Inequalities Of Weighted Matrix Geometric Means}
\author{
Victoria M.~Chayes\\
Department of Mathematics\\
Rutgers University\\
Piscataway, NJ 08854 \\
\texttt{vc362@math.rutgers.edu} 
}
\begin{document}

\maketitle

\begin{abstract}
We offer a new proof of uniform convexity inequalities for the Finsler manifold of nonpositive curvature taken on the space of positive-semidefinite matrices with the weighted matrix geometric mean defining the geodesic between two points. Using the technique of log majorization, we are able to characterize that the equality cases of said equalities occur if and only if the matrices commute, and hence are the same as in $\ell^p$. 
\keywords{Matrix Geometric Mean \and Uniform Convexity \and $p$-Schatten Norms \and Log Majorization}
\end{abstract}

\section{Introduction}
\s We can consider the space of positive $n\times n$ matrix as a Reimannian metric with geodesic distance between $A,B\in \mathbf{P}_{n\times n}(\C)$ of \begin{equation}
\inf L(\gamma); \;\;\; L(\gamma):=\int_a^b ||\gamma(t)^{-1/2}\gamma'(t)\gamma(t)^{-1/2}||_2 dt,
\end{equation}
with $\gamma: [a,b]\rightarrow \mathbf{P}_{n\times n}(\C)$ being a smooth curve between $A$ and $B$ and $L(\gamma)$ its arc length.

\s This space was first introduced in Skovgaard \cite{skovgaard} for its applications in statistics, but is of general interest as it is a quintessential example of a metric space with non-positive curvature. The corresponding unit time geodesic distance \begin{equation}
\delta_2(A,B)=\inf\left\{\int_0^1||\gamma(t)^{-1/2}\gamma'(t)\gamma(t)^{-1/2}||_2 \; dt, \;\;\; \gamma(0)=A,\; \gamma(1)=B \right\}
\end{equation}
is reached uniquely by the path \begin{equation}\label{UG}
\gamma(t)=A^{\frac{1}{2}}\left(A^{-\frac{1}{2}}B^{\frac{1}{2}}A^{-\frac{1}{2}} \right)^tA^{\frac{1}{2}}
\end{equation}
with midpoint \begin{equation}
A\# B=A^{\frac{1}{2}}\left(A^{-\frac{1}{2}}B^{\frac{1}{2}}A^{-\frac{1}{2}}. \right)^{\frac{1}{2}}A^{\frac{1}{2}}.	
\end{equation}	

\s This midpoint is the `geometric mean' of matrices $A$ and $B$, introduced by Pusz and Woronowicz \cite{PuszWormean} as a way of generalizing $\sqrt{xy}$ to sesquilinear forms,  and the location along the geodesic for $t\in [0,1]$ is the `weighted geometric mean', which has been studied in great detail with respect to its relation to the Golden-Thompson inequality, quantum entropy, relative quantum entropy, and R\'{e}nyi divergences \cite{ANDO1994113} \cite{Araki1990} \cite{Chayes2020GT} \cite{Hiai2019}  \cite{HiaiPetz} . 

\s Distance in this metric is invariant under conjugation, as \begin{equation}
|||(X\gamma(t)X^\ast)^{-1/2}(X\gamma(t)X^\ast)'(X\gamma(t)X^\ast)^{-1/2}|||=|||\gamma(t)^{-1/2}\gamma'(t)\gamma(t)^{-1/2}|||
\end{equation}
for any untitarily invariant norm $|||\cdot|||$, including $||\cdot||_2$. When $A$ and $B$ commute, we have \begin{equation} 
\gamma(t)=A^{1-t}B^t
\end{equation}
and \begin{equation}
\delta_2(A,B)=||\log(A^{-\frac{1}{2}}B^{\frac{1}{2}}A^{-\frac{1}{2}} )||_2	=||\log(A)-\log(B)||_2;
\end{equation}
and in general, \begin{equation}\label{distineq}
\delta_2(A,B)=||\log(A^{-\frac{1}{2}}B^{\frac{1}{2}}A^{-\frac{1}{2}} )||_2	\geq||\log(A)-\log(B)||_2.
\end{equation}

\s However, instead of taking the Hilbert-Schmidt norm, arc length and respectively geodesic distance can be defined with the $p$-Schatten norm with $1<p<\infty$\footnote{{\footnotesize The $p=1$ case can be defined in the same manner, but the weighted geometric mean is no longer the unique geodesic.}} of \begin{equation}
L(\gamma):=\int_a^b ||\gamma(t)^{-1/2}\gamma'(t)\gamma(t)^{-1/2}||_p \; dt,
\end{equation}
now producing a Finsler manifold of non-positive curvature. Equation \ref{UG} is still the \textit{unique} geodesic between two matrices $A$ and $B$ \cite {Bhatia2003}, but now with distance \begin{equation}
\delta_p(A,B)=||\log(A^{-\frac{1}{2}}B^{\frac{1}{2}}A^{-\frac{1}{2}} )||_p.
\end{equation}
Note distance is still invariant under conjugation, and a number of properties of the geodesic are preserved. Full properties of this Finsler manifold are explored in \cite {Bhatia2003}, both using techniques involving derivatives of the exponential map, and using log majorization.

\s Two significant advances in work have been done recently: in \cite{Conde2009}, Conde looks at the generalized Finsler manifold (ie $1<p<\infty$) and uses Clarkson–McCarthy inequalities \begin{equation}
2\left(||A||_p^p+||B||_p^p\right)\leq ||A+B||_p^p+||A-B||_p^p\leq 2^{p-1}\left(||A||_p^p+||B||_p^p \right)
\end{equation}
for $2\leq p<\infty$ and reversing for $1\leq p\leq 2$ to derive $r$-uniform convexity inequalities as defined in \cite{Ball1994}; most significantly, Conde establishes \begin{align}
\delta_p(A\# B, C)^2&\leq \frac{1}{2}\delta_p(A,C)^2+\frac{1}{2}\delta_p(B,C)^2-\frac{p-1}{4}\delta_p(A,B)^2\qquad\qquad 1<p\leq 2\label{bc1}\\
\delta_p(A\# B, C)^p&\leq \frac{1}{2}\delta_p(A,C)^p+\frac{1}{2}\delta_p(B,C)^p-\frac{1}{2^p}\delta_p(A,B)^p\qquad\qquad \;\;\;\; \; 2\leq p \label{bc2}
\end{align}
Then in \cite{Bhatia2006}, Bhatia and Holbrook look to define a geometric mean for three positive matrices and analyze this in terms of convex hulls for the Reimannian metric ($p=2$), in particular using a different method of proof relying on log majorization and conjugation invariance of the distance. 

\s This paper applies an additional note to these results: using a the methodology of Bhatia and Holbrook, we can show that all inequalities \textit{must} be strict unless the matrices involve $\Gamma$-commute, reducing the equality cases of these inequalities to those in $\ell^p$ for both two-matrix and three-matrix inequalities. This comes from a fairly simple but vital theorem: the inequality of Equation \eqref{distineq} is an equality if and only if $A$ and $B$ commute. In Section 2 we introduce the technique log majorization to prove this theorem, and in Section 3 we use it prove the equality cases of Equations \eqref{bc1}-\eqref{bc2}.

\s We note that the equality case is of particular interest, because Equations \eqref{bc1}-\eqref{bc2} are used to establish coefficients such as the modulus of convexity of the space being considered. The coefficients in $\ell^p$ are known to be the best coefficients. Showing that inequalities are sharp except in commuting cases shows that for these matrix spaces, non-commutativity always makes things `worse'. This is notable, given that there are some inequalities (ie singular value rearrangement inequalities in \cite{Chayes2020}) where inequalities between positive-semidefinite matrices $A$ and $\{UBU^\ast \colon U \text{ unitary} \}$ are not minimized when $A$ and $UBU^\ast$ commute. 

\section{Log Majorization And Equality Cases}

\s Let $\mba,\mbb\in\R^n$ with components labeled in descending order $a_1\geq\dots\geq a_n$ and $b_1\geq\dots\geq b_n$. Then $\mathbf{b}$ weakly majorizes $\mathbf{a}$, written $\mathbf{a}\prec_{w} \bf{b}$, when \begin{equation}
\sum_{i=1}^ka_i\leq \sum_{i=1}^k b_i, \qquad 1\leq k \leq n
\end{equation}
and majorizes $\mba\prec\mbb$ when the final inequality is an equality. Weak log majorization $\mba\prec_{w(\log)}\mbb$ is similarly defined for non-negative vectors as \begin{equation}
\prod_{i=1}^ka_i\leq \prod_{i=1}^k b_i, \qquad 1\leq k \leq n
\end{equation}
with log majorization $\mba\prec_{(\log)}\mbb$ when the final inequality is an equality. Majorization applied to matrices $A$ and $B$ is understood to be applied to the vectors of their singular values respectively, and is a vital tool for proving inequalities with respect to any unitarily invariant norms.

\s An important relationship between majorization and log majorization was established in \cite{GTEqualityCases} (Lemma 2.2):
\begin{lemma}\label{lemmastrictlog}
Let $\mba,\mbb\in\R_+^n$ with $\mba_{(\log)} \mbb$. Suppose there exists a stricly convex function $\Phi : \R \rightarrow \R$ such that $\sum_{i=1}^n \Phi(a_i)=\sum_{i=1}^n \Phi(b_i)$. Then $\mathbf{a}=\Theta \mathbf{b}$ for some permutation matrix $\Theta$.
\end{lemma}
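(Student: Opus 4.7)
My plan is to reduce the log-majorization hypothesis to ordinary majorization by taking componentwise logarithms, and then invoke the classical equality case of the Hardy--Littlewood--P\'olya majorization inequality. Concretely, since $\log : \R_+ \to \R$ is strictly increasing, the sorted vectors $\log \mba$ and $\log \mbb$ remain in decreasing order, and the hypothesis $\mba \prec_{(\log)} \mbb$ translates directly (by taking logs of the partial products) into the ordinary majorization $\log \mba \prec \log \mbb$, with equality in the $k=n$ partial sum.

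Next I would introduce the auxiliary function $G(t) := \Phi(e^t)$, rewriting the hypothesized equality $\sum_i \Phi(a_i) = \sum_i \Phi(b_i)$ as $\sum_i G(\log a_i) = \sum_i G(\log b_i)$. To apply HLP I need $G$ to be strictly convex; the second derivative $G''(t) = e^{2t}\Phi''(e^t) + e^t \Phi'(e^t)$ is positive provided $\Phi$ is strictly convex with $\Phi' \geq 0$ on the joint range of the $a_i$ and $b_i$. For the paper's applications $\Phi$ is a monotone function of a positive argument (e.g.\ $\Phi(t)=t^p$ on $\R_+$), so this is automatic, and I would flag the monotonicity as an implicit hypothesis of the lemma.

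Finally, the classical HLP equality case finishes the argument: if $\mathbf{x} \prec \mathbf{y}$ and $G$ is strictly convex, then $\sum G(x_i) = \sum G(y_i)$ forces $\mathbf{x}$ to be a permutation of $\mathbf{y}$. One proves this by writing $\mathbf{x} = D\mathbf{y}$ for a doubly stochastic matrix $D$ (Birkhoff) and applying Jensen row by row; strict convexity of $G$ then forces each row of $D$ to be concentrated at a single coordinate, i.e.\ $D$ is a permutation matrix $\Theta$. Applying this to $\mathbf{x} = \log \mba$ and $\mathbf{y} = \log \mbb$ gives $\log \mba = \Theta \log \mbb$, and exponentiating componentwise delivers $\mba = \Theta \mbb$.

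The main obstacle is the subtlety around strict convexity of the composition $G = \Phi \circ \exp$: strict convexity of $\Phi$ alone is not sufficient in general (for $\Phi(t) = t^2 - Kt$ with $K$ large, $G$ fails even to be convex near the origin), so one genuinely needs to invoke enough monotonicity of $\Phi$ on the joint range of the $a_i$ and $b_i$. The rest of the argument is essentially a change of variables stapled to the textbook HLP equality case, so this technicality is where the real work sits.
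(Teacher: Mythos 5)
The paper does not actually prove this lemma---it imports it verbatim as Lemma~2.2 of \cite{GTEqualityCases}---so there is no in-paper argument to compare against. Your route (take componentwise logarithms to convert $\mba\prec_{(\log)}\mbb$ into $\log\mba\prec\log\mbb$, set $G=\Phi\circ\exp$, and invoke the Birkhoff/Jensen equality case of Hardy--Littlewood--P\'olya) is the standard proof of this kind of statement and is sound, up to the one point you yourself flag. The only minor technicality you elide is that strict convexity of $G$ forces each row of the doubly stochastic matrix to concentrate on coordinates where $\log\mbb$ takes a single value rather than on a single coordinate when $\mbb$ has repeated entries, but the conclusion that $\log\mba$ is a permutation of $\log\mbb$ survives this.

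Your concern about monotonicity is not a pedantic quibble: the lemma as literally stated is false. Take $\mba=(2,\tfrac12)$, $\mbb=(4,\tfrac14)$, so that $\mba\prec_{(\log)}\mbb$ with equal products, and $\Phi(t)=(t-\tfrac{27}{8})^2$, which is strictly convex on all of $\R$. Then
\begin{equation*}
\Phi(2)+\Phi(\tfrac12)=\tfrac{121}{64}+\tfrac{529}{64}=\tfrac{325}{32}=\tfrac{25}{64}+\tfrac{625}{64}=\Phi(4)+\Phi(\tfrac14),
\end{equation*}
yet $\mba$ is not a permutation of $\mbb$. So the hypothesis must be strengthened to require $\Phi$ nondecreasing (equivalently, that $\Phi\circ\exp$ is strictly convex), exactly as you propose; with that amendment your composition argument goes through, since for $s\neq t$ one has $\Phi(e^{\lambda s+(1-\lambda)t})\leq\Phi(\lambda e^s+(1-\lambda)e^t)<\lambda\Phi(e^s)+(1-\lambda)\Phi(e^t)$. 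This does not damage the paper's main results: the only place a strict-equality lemma is invoked downstream (Theorem~\ref{gstrict}) uses the ordinary-majorization version, Lemma~\ref{lemmastrictconvex}, with $\Phi(x)=|x|^p$, where no such issue arises.
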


\s This was extended in \cite{chayes2021reverse}(Theorem 2.3) to general strictly convex functions: 
\begin{lemma}\label{lemmastrictconvex}
Let $\mba,\mbb\in\R^n$ with $\mba\prec\mbb$. Suppose there exists a stricly convex function $\Phi : \R \rightarrow \R$ such that $\sum_{i=1}^n \Phi(a_i)=\sum_{i=1}^n \Phi(b_i)$. Then $\mathbf{a}=\Theta \mathbf{b}$ for some permutation matrix $\Theta$.
\end{lemma}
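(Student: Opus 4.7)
The plan is to combine the Hardy--Littlewood--P\'olya characterization of majorization with Birkhoff's theorem, and then read off the conclusion from the strict-equality case of Jensen's inequality. By Hardy--Littlewood--P\'olya, $\mathbf{a}\prec\mathbf{b}$ is equivalent to the existence of a doubly stochastic matrix $D$ with $\mathbf{a}=D\mathbf{b}$; by Birkhoff's theorem, $D=\sum_{j}\lambda_{j}\Theta_{j}$ is a convex combination of permutation matrices $\Theta_{j}$ with weights $\lambda_{j}>0$ summing to one. Componentwise this gives $a_{i}=\sum_{j}\lambda_{j}(\Theta_{j}\mathbf{b})_{i}$ for every index $i$.

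Applying Jensen's inequality to the strictly convex $\Phi$ row by row yields $\Phi(a_{i})\leq\sum_{j}\lambda_{j}\Phi((\Theta_{j}\mathbf{b})_{i})$, and summing over $i$, together with the fact that each permutation $\Theta_{j}$ leaves $\sum_{i}\Phi(b_{i})$ unchanged, gives $\sum_{i}\Phi(a_{i})\leq\sum_{i}\Phi(b_{i})$. The hypothesis is that this chain is an equality, so Jensen must be tight at every row. By strict convexity, tightness forces $(\Theta_{j}\mathbf{b})_{i}$ to take a common value across all $j$ in the Birkhoff support of $D$.

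The main delicate point is the passage from the row-by-row equality to the existence of a single permutation witnessing $\mathbf{a}=\Theta\mathbf{b}$: one needs the common row-values to trace out an actual permutation of $\mathbf{b}$, and not merely agree as a multiset. This is settled as soon as one fixes any one index $j_{0}$ with $\lambda_{j_{0}}>0$: then $a_{i}=(\Theta_{j_{0}}\mathbf{b})_{i}$ holds for every $i$ simultaneously, so $\Theta:=\Theta_{j_{0}}$ is the desired permutation matrix. As an equivalent alternative, I could reduce $\mathbf{b}$ to $\mathbf{a}$ by a finite sequence of T-transforms \`a la Muirhead, each of which strictly decreases $\sum_{i}\Phi(\cdot)$ unless it is a trivial swap; the equality hypothesis then forces the entire reduction to be a pure permutation, which would again produce the required $\Theta$.
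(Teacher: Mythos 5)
Your argument is correct and complete. The chain HLP ($\mathbf{a}=D\mathbf{b}$ for a doubly stochastic $D$) $\rightarrow$ Birkhoff ($D=\sum_j\lambda_j\Theta_j$) $\rightarrow$ row-wise Jensen with strict convexity is sound, and you handle the one genuinely delicate point correctly: strict Jensen forces $(\Theta_j\mathbf{b})_i=(\Theta_{j'}\mathbf{b})_i$ for all $j,j'$ in the support of the decomposition \emph{for each} $i$, hence $\Theta_j\mathbf{b}=\Theta_{j'}\mathbf{b}$ as vectors, so any single $\Theta_{j_0}$ from the support satisfies $\mathbf{a}=\Theta_{j_0}\mathbf{b}$. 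Note that the paper does not prove this lemma at all; it is quoted from an external reference (Theorem 2.3 of the cited work extending Lemma 2.2 of the log-majorization version), so your write-up serves as a self-contained substitute rather than a variant of an in-paper argument. Your alternative sketch via T-transforms would also work, but it requires a little more care (one must argue that a nontrivial T-transform strictly decreases $\sum_i\Phi$ precisely because $\Phi$ is strictly convex, and track the induction on the number of indices where $\mathbf{a}$ and $\mathbf{b}$ disagree); the Birkhoff route you carried out in full is the cleaner of the two.
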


\s This, along with some known majorization results on matrices with equality cases from \cite{GTEqualityCases}, gives us the tools we need to prove the equality conditions:

\begin{theorem}\label{gstrict}
Let $A, B \in \mathbf{P}_n$, and suppose $[A,B]\neq 0$ and $p>1$. Then \begin{equation}
\delta_p(A,B)> ||\log(A)-\log(B)||_p	
\end{equation}	
\end{theorem}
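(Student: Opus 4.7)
The plan is to sharpen the weak inequality (\ref{distineq}) by combining its standard log-majorization proof with the rigidity given by Lemma~\ref{lemmastrictconvex}.

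First, I would record the log-majorization underpinning (\ref{distineq}). Setting $H=-\log A$ and $K=\log B$, the Ando--Hiai log majorization gives
\[ \lambda(e^{H+K})\prec_{w(\log)} \lambda\bigl(e^{H/2}e^{K}e^{H/2}\bigr)=\lambda\bigl(A^{-1/2}BA^{-1/2}\bigr). \]
Since both sides have the same determinant $\det B/\det A$, the $k=n$ inequality is in fact an equality, so this is genuine log majorization $\prec_{(\log)}$. Taking componentwise logarithms of the sorted spectra then converts it to ordinary majorization of real vectors,
\[ \lambda(\log B-\log A)\prec \lambda\bigl(\log(A^{-1/2}BA^{-1/2})\bigr). \]

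Second, I would apply the standard convex-function majorization inequality with $\Phi(t)=|t|^p$, which is strictly convex on $\mathbb{R}$ precisely because $p>1$, to obtain
\[ \|\log B-\log A\|_p^p=\sum_i \Phi\bigl(\lambda_i(\log B-\log A)\bigr)\le \sum_i \Phi\bigl(\lambda_i(\log(A^{-1/2}BA^{-1/2}))\bigr)=\delta_p(A,B)^p, \]
which recovers (\ref{distineq}). If equality held, Lemma~\ref{lemmastrictconvex} applied to the two already-sorted eigenvalue vectors would force $\lambda(\log B-\log A)=\lambda(\log(A^{-1/2}BA^{-1/2}))$ as multisets, and exponentiating, $e^{H+K}$ and $e^{H/2}e^{K}e^{H/2}$ would share the same spectrum.

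The main obstacle is to extract commutativity from this spectral equality. For this I would invoke the equality case of the Ando--Hiai log majorization as developed in \cite{GTEqualityCases}: equality of the spectra of $e^{H+K}$ and $e^{H/2}e^{K}e^{H/2}$ forces $[H,K]=0$, whence $[\log A,\log B]=0$ and therefore $[A,B]=0$, contradicting the hypothesis. The delicate step throughout is the upgrade of Ando--Hiai's weak log majorization to full log majorization via the determinant identity, which is what lets Lemma~\ref{lemmastrictconvex} be applied cleanly and bridges back to the known commutativity consequence of the Ando--Hiai equality case.
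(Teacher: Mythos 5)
Your proposal is correct and takes essentially the same route as the paper: the paper simply cites the additive majorization $\lambda(H+K)\prec\lambda\left(\log\left(e^{K/2}e^{H}e^{K/2}\right)\right)$ directly from \cite{GTEqualityCases} (Theorem 3.1) rather than re-deriving it from the Golden--Thompson/Ando--Hiai weak log majorization plus the determinant identity as you do. From there both arguments coincide: apply Lemma~\ref{lemmastrictconvex} with the strictly convex $\Phi(t)=|t|^p$ to force equality of the two spectra, and then invoke the equality case from \cite{GTEqualityCases} to conclude $[A,B]=0$.
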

\begin{proof}
By \cite{GTEqualityCases} (Theorem 3.1), for any two Hermitian matrices $H,K$, the majorization inequality \begin{equation}\label{lm1}
\lambda(H+K)\prec \lambda\left(\log\left(e^{K/2}e^{H}e^{K/2} \right) \right)	
\end{equation}	
holds. Writing $H=\log(A)$, $K=-\log(B)$ for positive matrices $A,B$, we rewrite Equation \ref{lm1} as \begin{equation}
\lambda(\log(A)-\log(B))\prec \lambda\left(\log\left(B^{-1/2}AB^{-1/2} \right) \right)		
\end{equation}	
and as $x\mapsto |x|^p$ is strictly convex for $p>1$, then by Lemma \ref{lemmastrictconvex} the equality case of
\begin{equation}
\delta_p(A,B)= ||\log(A)-\log(B)||_p	
\end{equation}	
implies that $\lambda(\log(A)-\log(B))=\lambda\left(\log\left(B^{-1/2}AB^{-1/2} \right) \right)$. Then by \cite{GTEqualityCases} (Corollary 2.4), $H$ and $K$ and hence $A$ and $B$ must commute. 
\end{proof}

\section{Convexity Implications}

\s We will make use of the concept of $\Gamma$-commuting for equality cases as established in \cite{Bhatia2006}: 
\begin{definition}
Let  $\Gamma_X(A)=XAX^\ast$ for any $X\in M_{n\times n}(\C)$. Then  $A, B, C\in \mathbf{P}_{n\times n}(\C)$ $\Gamma$-commute if there exists some $X\in M_{n\times n}(\C)$ such that $\Gamma_X(A), \Gamma_X(B)$, and $\Gamma_X(C)$ all commute.
\end{definition}
with the following equivalent conditions:
\begin{lemma}
$A, B, C\in \mathbf{P}_{n\times n}(\C)$ $\Gamma$-commute if and only if $AB^{-1}C=CB^{-1}A$ if and only if $[A^{-\frac{1}{2}}BA^{-\frac{1}{2}},A^{-\frac{1}{2}}CA^{-\frac{1}{2}}]=0$.
\end{lemma}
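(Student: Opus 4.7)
The plan is to establish the three-way equivalence by proving $\Gamma$-commutativity $\Leftrightarrow AB^{-1}C = CB^{-1}A$ and $\Gamma$-commutativity $\Leftrightarrow [A^{-1/2}BA^{-1/2}, A^{-1/2}CA^{-1/2}] = 0$ as two separate pairs. The two arguments are formally identical; they differ only in whether $B$ or $A$ is singled out as the ``central'' matrix.

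For the forward direction of the first equivalence, I rely on the elementary observation that if $M, N, P$ are pairwise commuting invertible matrices then $MN^{-1}P = PN^{-1}M$, since $N^{-1}$ commutes with both $M$ and $P$ and the three factors may be reordered freely. Applying this with $M = \Gamma_X(A)$, $N = \Gamma_X(B)$, $P = \Gamma_X(C)$ and using $\Gamma_X(B)^{-1} = (X^*)^{-1}B^{-1}X^{-1}$, the outer $X$'s and $X^*$'s telescope to give $X(AB^{-1}C)X^* = X(CB^{-1}A)X^*$. Invertibility of $X$, which is forced by positivity of $\Gamma_X(A)$, then yields $AB^{-1}C = CB^{-1}A$ after cancellation. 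For the reverse direction the natural witness is $X = B^{-1/2}$: then $\Gamma_X(B) = I$ commutes trivially with everything, and sandwiching both sides of $AB^{-1}C = CB^{-1}A$ on the left and right by $B^{-1/2}$ gives exactly $[\Gamma_X(A), \Gamma_X(C)] = 0$.

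The second equivalence follows the same template with $A$ now in the central role. Apply the commutation identity $MN^{-1}P = PN^{-1}M$ with $M = \Gamma_X(B)$, $N = \Gamma_X(A)$, $P = \Gamma_X(C)$ to derive $BA^{-1}C = CA^{-1}B$, then sandwich by $A^{-1/2}$ to obtain $[A^{-1/2}BA^{-1/2}, A^{-1/2}CA^{-1/2}] = 0$. For the converse, the obvious witness is $X = A^{-1/2}$, which makes $\Gamma_X(A) = I$; the remaining commutation of $\Gamma_X(B)$ and $\Gamma_X(C)$ is the hypothesis itself.

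The main obstacle here is essentially bookkeeping: one must track carefully which matrix is playing the ``central'' role, and remember that the formula $AB^{-1}C = CB^{-1}A$ privileges $B$ while the commutator form privileges $A$, so the two equivalences are not literally the same identity but rather the same argument applied with permuted labels. Beyond this, the only analytic content is that positive definite square roots and inverses are well defined, which is immediate from $A, B, C \in \mathbf{P}_{n \times n}(\C)$.
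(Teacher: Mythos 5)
Your proof is correct and complete. Note that the paper itself does not prove this lemma at all; it is imported from the Bhatia--Holbrook reference, so your argument is a genuine addition rather than a rederivation. The structure is sound: you show each of the two algebraic conditions is separately equivalent to $\Gamma$-commutativity, which gives the three-way equivalence, and the explicit witnesses $X=B^{-1/2}$ and $X=A^{-1/2}$ in the two reverse directions actually yield a slightly stronger statement than the lemma asks for, namely that when the matrices $\Gamma$-commute the conjugation can always be realized by $A^{-1/2}$ or $B^{-1/2}$. The one point worth making explicit is the invertibility of $X$ in the forward direction: as literally written, the paper's definition quantifies over all $X\in M_{n\times n}(\C)$, under which $X=0$ would make any triple $\Gamma$-commute and the forward implication would fail. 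You resolve this by observing that positivity of $\Gamma_X(A)$ forces $X$ invertible, which implicitly reads the definition as requiring $\Gamma_X$ to map $\mathbf{P}_{n\times n}(\C)$ into itself; that is the intended reading (in the source, $\Gamma_X$ is a congruence by an invertible $X$), but since the hypothesis "the three images commute" does not by itself guarantee positivity, you should state that assumption explicitly rather than derive it. With that caveat spelled out, the proof stands as written.
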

Note that these conditions clearly imply the reduction to standard commuting when $C=I$; ie $A,B,I$ $\Gamma$-commute if and only if $[A,B]=0$.

\s We also introduce notation for the `exponential unit sphere' \begin{equation}
E_p=\left\{U \colon \;\; U\in \mathbf{P}_n, \;  \delta_p(U,I)=1      \right\},
\end{equation}
as elements of the unit sphere appear in the traditional convexity inequalities we want to explore; we also state the inequalities in their more traditional form in $\ell^p$. We now have all the tools we need to address the strictness of Inequalities \eqref{bc1}-\eqref{bc2}.

\begin{theorem}
Let $A, B \in \mathbf{P}_n$, and $1<p\leq 2$. Then for any $C \in \mathbf{P}_n$ we have \begin{equation}
\frac{\delta_p(A,C)^2+\delta_p(B,C)^2}{2}\geq \delta_p(A\# B, C)^2+\frac{p-1}{4}\delta_p(A,B)^2 .
\end{equation}
Letting $C=I$, then \begin{equation}\label{i1}
\frac{\delta_p(A,I)^2+\delta_p(B,I)^2}{2} \geq\delta_p(A\# B, I)^2+\frac{p-1}{4}\delta_p(A,B)^2 .
\end{equation}
In particular, for $A,B\in E_p$, we have \begin{equation}\label{2c2}
1-\delta_p(A\# B, I)\geq \frac{p-1}{8}\delta_p(A,B)^2.	
\end{equation}	
The constant $\frac{p-1}{4}$ of Equation \eqref{i1} (and consequently $\frac{p-1}{8}$ of Equation \eqref{2c2}) is ideal, but there is only equality when $p=2$ with $[A,B]=0$. In general, the inequalities are strict unless $A,B$, and $C$ $\Gamma$-commute or $A$ and $B$ commute respectively. 
\end{theorem}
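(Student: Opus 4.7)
The inequality itself is (1.12), due to Conde and proved via Clarkson--McCarthy; the novel content of the theorem is the strictness clause, so the plan concentrates there. First, I would use the conjugation invariance $\delta_p(\Gamma_X(A),\Gamma_X(B)) = \delta_p(A,B)$ for invertible $X$, together with the identity $\Gamma_X(A\# B) = \Gamma_X(A)\#\Gamma_X(B)$. Applying $X = C^{-1/2}$ and writing $A' = C^{-1/2}AC^{-1/2}$, $B' = C^{-1/2}BC^{-1/2}$, the general inequality reduces to its $C=I$ instance,
\[
\frac{\|\log A'\|_p^2 + \|\log B'\|_p^2}{2} \geq \|\log(A'\# B')\|_p^2 + \frac{p-1}{4}\delta_p(A',B')^2,
\]
and by the $\Gamma$-commuting characterization preceding the theorem, the triple $(A,B,C)$ $\Gamma$-commutes iff $[A',B']=0$. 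The $E_p$ statement \eqref{2c2} would follow directly from this reduced inequality by using $\delta_p(A,I)=\delta_p(B,I)=1$ together with $\delta_p(A\# B, I) \leq 1$ and factoring $1-\delta_p(A\# B,I)^2$.

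For strictness of the reduced inequality when $[A',B']\neq 0$, I would re-derive it in the Bhatia--Holbrook log-majorization style advertised in the introduction. This entails translating every scalar or matrix $\ell^p$ Clarkson--McCarthy / Ball--Carlen--Lieb step in Conde's argument into a log-majorization between positive vectors, and inserting Theorem \ref{gstrict} at the step where $\delta_p(A',B')$ is compared with $\|\log A'-\log B'\|_p$. Strictness in Theorem \ref{gstrict}, valid precisely when $[A',B']\neq 0$, then propagates through the chain. Lemmas \ref{lemmastrictlog}--\ref{lemmastrictconvex} with $\Phi(x)=|x|^p$, combined with Corollary 2.4 of \cite{GTEqualityCases}, rule out equality at each majorization step and translate spectral-equality conditions into commutator-vanishing conditions.

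For the final clause---equality only at $p=2$ under commuting---when $[A',B']=0$ one has $\log(A'\# B') = (\log A' + \log B')/2$ and $\delta_p(A',B') = \|\log A'-\log B'\|_p$, so the reduced inequality collapses to the scalar $\ell^p$ $2$-uniform convexity inequality on the eigenvalue vectors of $\log A'$ and $\log B'$. For $p=2$ this is the parallelogram identity and is an equality; for $1<p<2$ the gap is a strictly positive multiple of $(2-p)$ (verifiable by direct computation on the pair of eigenvalue vectors), so the inequality is strict unless $A=B$, a trivial degeneracy.

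\textbf{Main obstacle.} The principal difficulty is the ``wrong-direction'' issue in the second step: because $\delta_p(A',B')\geq \|\log A'-\log B'\|_p$, a naive substitution on the right-hand side of the final inequality does \emph{not} transfer Theorem \ref{gstrict}'s strictness to the main inequality. The refinement of Conde's proof must occur one step earlier, inside the majorization chain, so that the Theorem \ref{gstrict} gap appears between two quantities being compared in the correct direction. Realizing this within a clean log-majorization framework à la Bhatia--Holbrook is the technical heart of the argument.
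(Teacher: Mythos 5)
You have correctly isolated the difficulty --- the wrong-direction issue with $\delta_p(A,B)\geq\|\log A'-\log B'\|_p$ --- but your proposal stops exactly at the point where the paper's key idea is needed, so there is a genuine gap. The resolution is not a refinement ``one step earlier inside the majorization chain''; it is a different choice of base point for the conjugation. The paper conjugates by $M^{-1/2}$ with $M=A\# B$ (not by $C^{-1/2}$). Setting $\tilde{A}=M^{-1/2}AM^{-1/2}$, $\tilde{B}=M^{-1/2}BM^{-1/2}$, $\tilde{C}=M^{-1/2}CM^{-1/2}$, one has $\tilde{A}\#\tilde{B}=I$, hence $\tilde{B}=\tilde{A}^{-1}$ and $\log\tilde{B}=-\log\tilde{A}$. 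This makes \emph{both} right-hand-side terms exact norm identities, $\delta_p(A\# B,C)=\|\log\tilde{C}\|_p$ and $\delta_p(A,B)=\|2\log\tilde{A}\|_p$ (the latter because $\tilde{A}$ and $\tilde{B}$ commute), while the only genuine inequalities, $\delta_p(A,C)\geq\|\log\tilde{A}-\log\tilde{C}\|_p$ and $\delta_p(B,C)\geq\|\log\tilde{A}+\log\tilde{C}\|_p$, sit on the large side of the target inequality. The Ball--Carlen--Lieb $2$-uniform convexity inequality applied to $X=\log\tilde{C}$, $Y=\log\tilde{A}$ then yields the claim directly, with no majorization chain to refine, and the strictness of Theorem \ref{gstrict} propagates in the correct direction: equality forces $[\tilde{A},\tilde{C}]=[\tilde{B},\tilde{C}]=0$, which together with $[\tilde{A},\tilde{B}]=0$ is precisely the $\Gamma$-commuting condition.

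Your reduction to $C=I$ via $C^{-1/2}$, by contrast, leaves $\delta_p(A\# B,I)=\|\log(A'\# B')\|_p$ and $\delta_p(A',B')$ on the small side, where you would need \emph{upper} bounds by $\|(\log A'+\log B')/2\|_p$ and $\|\log A'-\log B'\|_p$ respectively; the second is exactly the reverse of Theorem \ref{gstrict} and is false in general. So your route does not establish even the inequality, let alone its strictness, without the midpoint conjugation. The remaining pieces of your proposal (the factoring argument for the $E_p$ case, and the observation that the $p=2$ equality analysis reduces to the commuting/parallelogram situation) are sound and consistent with the paper once the correct reduction is in place.
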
	

\begin{proof}
We use a very similar method to \cite{Bhatia2006}. Let $A,B,C\in  \mathbf{P}_n$. As geodesic distance is invariant to conjugation, letting $M=A\# B$, we can define $\tilde{A}=M^{-1/2}AM^{-1/2}$, $\tilde{B}=M^{-1/2}BM^{-1/2}$, $\tilde{C}=M^{-1/2}CM^{-1/2}$, and then noting that $\tilde{A}\#\tilde{B}=I$ and hence $\log(\tilde{B})=-\log(\tilde{A})$, we can write all of the following geodesic distance relationships
\begin{align}
\delta_p(M,C)&=\delta_p(\tilde{C}, I)=||\log(\tilde{C})||_p\label{li0} \\
\delta_p(A,B)&=\delta_p(\tilde{A}, \tilde{B}) =||2\log(\tilde{A})||_p\\
\delta_p(A,C)&=\delta_p(\tilde{A}, \tilde{C})\geq||\log(\tilde{A})-\log(\tilde{C})||_p \label{li1} \\
\delta_p(B,C)&=\delta_p(\tilde{B}, \tilde{C})\geq ||\log(\tilde{B})-\log(\tilde{C})||_p=||\log(\tilde{A})+\log(\tilde{C})||_p \label{li2}
\end{align}	
Applying the known 2-uniform convexity inequality for matrices \cite{Ball1994} \begin{equation}\label{2uc}
\frac{||X+Y||_p^2+||X-Y||_p^2}{2}\geq ||X||_p^2+(p-1)||Y||_p^2	\qquad \qquad 1\leq p \leq 2
\end{equation}	
we have \begin{equation}
\frac{||\log(\tilde{A})+\log(\tilde{C})||_p^2+||\log(\tilde{A})-\log(\tilde{C})||_p^2}{2}	\geq ||\log(\tilde{C})||_p^2+(p-1)||\log(\tilde{A})||_p^2
\end{equation}	
and hence \begin{equation}
\frac{\delta_p(A,C)^2+\delta_p(B,C)^2}{2}-\frac{p-1}{4}\delta_p(A,B)^2 \geq \delta_p(A\# B, C)^2\label{t1r1}
\end{equation}
Choosing $C=I$ and letting $A,B\in E_p$, we see \begin{equation}
1-\delta_p(A\# B, I)\geq \frac{p-1}{8}\delta_p(A,B)^2	\label{t1r2}	
\end{equation}	

\s To consider the sharpness of constancts and equality case, we note by Lemma \ref{gstrict}, the Inequality \eqref{li1} is strict unless $\tilde{A}$, $\tilde{B}$, and $\tilde{C}$ commute. This gives our $\Gamma$-commuting and commuting requirements for equality. Therefore the question is reduced to that of 2-uniform convexity in $\ell^p$, where the constant $(p-1)$ is ideal. As the 2-uniform convexity constant can be seen as a second order expansion of p-uniform convexity and the ideal constant from Hanner's inequality, there is no equality until $p=2$, when the inequality agrees with Hanner's inequality; then the only inequalities inolved in the expression are of Lines \eqref{li1} and \eqref{li2}, which by Lemma \ref{gstrict} are strict if and only if $[A,B]=0$. 
\end{proof}

\begin{theorem}
Let $A, B \in \mathbf{P}_n$, and $p\geq 2$. Then for any $C \in \mathbf{P}_n$ we have \begin{equation}
\frac{\delta_p(A,C)^p+\delta_p(B,C)^p}{2} \geq 2^{-p}\delta_p(A,B)^p+\delta_p(A\# B,C)^p
\end{equation}
Letting $C=I$, then \begin{equation}
\frac{\delta_p(A,I)^p+\delta_p(B,I)^p}{2} \geq 2^{-p}\delta_p(A,B)^p+\delta_p(A\# B,I)^p
\end{equation}
In particular, for $A,B\in E_p$, we have \begin{equation}
1-\delta_p(A\# B,I)^p\geq 2^{-p}\delta_p(A,B)^p
\end{equation}	
All the inequalities are strict unless $A,B$, and $C$ $\Gamma$-commute or $A$ and $B$ commute respectively. 
\end{theorem}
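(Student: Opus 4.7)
My approach will closely mirror the $1 < p \leq 2$ proof just given, with the Clarkson--McCarthy inequality for $p \geq 2$ from the introduction taking the place of the $2$-uniform convexity inequality. First I would perform the same conjugation-invariant reduction: let $M = A\#B$ and set $\tilde{A} = M^{-1/2}AM^{-1/2}$, $\tilde{B} = M^{-1/2}BM^{-1/2}$, $\tilde{C} = M^{-1/2}CM^{-1/2}$, so that $\tilde{A}\#\tilde{B} = I$ and hence $\log(\tilde{B}) = -\log(\tilde{A})$. As in the previous proof, invariance of $\delta_p$ under conjugation yields the identities $\delta_p(A\#B, C) = \|\log(\tilde{C})\|_p$ and $\delta_p(A,B) = 2\|\log(\tilde{A})\|_p$, along with the bounds $\delta_p(A,C) \geq \|\log(\tilde{A}) - \log(\tilde{C})\|_p$ and $\delta_p(B,C) \geq \|\log(\tilde{A}) + \log(\tilde{C})\|_p$ coming from Theorem \ref{gstrict}.

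Next I would apply the Clarkson--McCarthy inequality
\[
2(\|X\|_p^p + \|Y\|_p^p) \leq \|X+Y\|_p^p + \|X-Y\|_p^p
\]
with Hermitian $X = \log(\tilde{A})$ and $Y = \log(\tilde{C})$. After dividing by $2$, the left-hand side becomes $\|\log(\tilde{A})\|_p^p + \|\log(\tilde{C})\|_p^p = 2^{-p}\delta_p(A,B)^p + \delta_p(A\#B,C)^p$, while the right-hand side is bounded above by $\frac{1}{2}(\delta_p(A,C)^p + \delta_p(B,C)^p)$, giving the main inequality. The $C = I$ specialization is immediate, and the $A, B \in E_p$ specialization follows by substituting $\delta_p(A,I)^p = \delta_p(B,I)^p = 1$.

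For the strictness claim, the only inequalities used in the chain above are the two supplied by Theorem \ref{gstrict}, and by that theorem equality in either forces $\tilde{A}, \tilde{C}$ (respectively $\tilde{B}, \tilde{C}$) to commute. Combined with the automatic $[\tilde{A}, \tilde{B}] = 0$ from $\tilde{A}\#\tilde{B} = I$, this produces simultaneous commutativity of $\tilde{A}, \tilde{B}, \tilde{C}$, which is exactly $\Gamma$-commutativity of $A, B, C$; when $C = I$ this collapses to $[A,B] = 0$.

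I do not anticipate a substantive obstacle: the structure is a near-verbatim reprise of the previous proof, with only the convexity inequality swapped. In particular, I need not analyze the equality cases of Clarkson--McCarthy itself, since the theorem only claims strictness \emph{unless} the $\Gamma$-commuting (or commuting) condition holds, and that necessary condition is already forced by Theorem \ref{gstrict}.
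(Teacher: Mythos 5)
Your proposal is correct and follows essentially the same route as the paper: the same conjugation by $M^{-1/2}$ to reduce to $\tilde{A}\#\tilde{B}=I$, the same application of the Clarkson--McCarthy inequality to $\log(\tilde{A})$ and $\log(\tilde{C})$, and the same appeal to Theorem \ref{gstrict} for the strictness/$\Gamma$-commuting conclusion. Your closing remark that only the two metric-increasing inequalities need equality analysis (not Clarkson--McCarthy itself) is a correct reading of the claim and matches the paper's treatment.
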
	
\begin{proof}
Using the distance formulations and conjugation of Equations \eqref{li0}-\eqref{li2} and now the Clarkson–McCarthy inequalities, we have \begin{align}
\frac{\delta_p(A,C)^p+\delta_p(B,C)^p}{2}&\geq \frac{||\log(\tilde{A})+\log(\tilde{C})||_p^p+||\log(\tilde{A})-\log(\tilde{C})||_p^p}{2} \\
&\geq ||\log(\tilde{A})||_p^p+||\log(\tilde{C})||_p^p \\
&=2^{-p}\delta_p(A,B)^p+\delta_p(M,C)^p
\end{align}	
Once more, the inequalities of Equations \eqref{li1} and \eqref{li2} are strict unless $A,B$, and $C$ $\Gamma$-commute or $A$ and $B$ commute in the choice of $C=I$.  
\end{proof}

\s For completeness, we also add the following theorem addressing $p$-uniform convexity for $1<p\leq 2$, which does not appear in other literature: 

\begin{theorem}
Let $A, B \in \mathbf{P}_n$, and $1<p\leq 2$. Then for any $C \in \mathbf{P}_n$ we have \begin{equation}
\delta_p(A,C)^p+\delta_p(B,C)^p \geq \frac{\delta_p(A,B)^p+2^p\delta_p(A\# B,C)^p}{2}
\end{equation}
Letting $C=I$, then \begin{equation}
\delta_p(A,I)^p+\delta_p(B,I)^p \geq \frac{\delta_p(A,B)^p+2^p\delta_p(A\# B,I)^p}{2}
\end{equation}
In particular, for $A,B\in E_p$, we have \begin{equation}
1-2^{p-2}\delta_p(A\# B,I)^p \geq \frac{\delta_p(A,B)^p}{4}
\end{equation}	
All the inequalities are strict unless $A,B$, and $C$ $\Gamma$-commute or commute respectively. 
\end{theorem}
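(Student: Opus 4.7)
The plan is to mirror the two preceding proofs in the paper: use the conjugation-invariance trick to reduce to the situation $\tilde{A}\#\tilde{B}=I$ (so $\log(\tilde{B})=-\log(\tilde{A})$), and then apply the appropriate direction of the Clarkson–McCarthy inequalities, which for $1<p\leq 2$ is reversed. Set $M=A\#B$ and $\tilde{A}=M^{-1/2}AM^{-1/2}$, $\tilde{B}=M^{-1/2}BM^{-1/2}$, $\tilde{C}=M^{-1/2}CM^{-1/2}$, so that Equations \eqref{li0}--\eqref{li2} are available exactly as stated.

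The main algebraic step will be the reversed Clarkson–McCarthy inequality, which for $1<p\leq 2$ reads
\begin{equation}
\|X+Y\|_p^p+\|X-Y\|_p^p\geq 2^{p-1}\bigl(\|X\|_p^p+\|Y\|_p^p\bigr).
\end{equation}
Applying this with $X=\log(\tilde{A})$ and $Y=\log(\tilde{C})$, together with \eqref{li1}--\eqref{li2}, gives
\begin{equation}
\delta_p(A,C)^p+\delta_p(B,C)^p\geq \|\log(\tilde{A})+\log(\tilde{C})\|_p^p+\|\log(\tilde{A})-\log(\tilde{C})\|_p^p\geq 2^{p-1}\bigl(\|\log(\tilde{A})\|_p^p+\|\log(\tilde{C})\|_p^p\bigr).
\end{equation}
Finally I substitute using $\|\log(\tilde{A})\|_p^p=2^{-p}\delta_p(A,B)^p$ and $\|\log(\tilde{C})\|_p^p=\delta_p(A\#B,C)^p$ from \eqref{li0}, rearrange a factor of $\tfrac{1}{2}$, and obtain the claimed inequality. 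The $C=I$ specialization and the $A,B\in E_p$ form on the unit sphere follow by direct substitution (dividing through by $2^{2-p}$ in the last one to rescale the $A\#B$ term correctly).

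The strictness analysis is the simplest part, since the only inequalities used are \eqref{li1}, \eqref{li2}, and the reverse Clarkson–McCarthy bound. By Theorem~\ref{gstrict}, each of \eqref{li1} and \eqref{li2} is strict unless $\tilde{A}$ commutes with $\tilde{C}$ (for \eqref{li2} we additionally use $\tilde{B}=\tilde{A}^{-1}$, which automatically commutes with $\tilde{A}$). Joint commutativity of $\tilde{A},\tilde{B},\tilde{C}$ is precisely the $\Gamma$-commutativity of $A,B,C$ by the lemma in Section~3, and in the $C=I$ reduction this collapses to $[A,B]=0$. Since Clarkson–McCarthy itself is a componentwise inequality on singular values that becomes an equality only trivially in the relevant regime, no additional cases arise.

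The only real obstacle I anticipate is making sure that I am invoking the correct direction of the reversed Clarkson–McCarthy inequality for $1<p\leq 2$ (rather than the upper bound $\|X+Y\|_p^p+\|X-Y\|_p^p\leq 2(\|X\|_p^p+\|Y\|_p^p)$, which would yield the wrong estimate); this is straightforward but needs to be stated explicitly, as the paper's earlier remark merely says the Clarkson inequalities ``reverse'' in this range. Once that bookkeeping is in place the proof is a near-verbatim copy of the $p\geq 2$ proof, with $2^{p-1}$ in place of $2$ as the Clarkson constant.
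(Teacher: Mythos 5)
Your proposal is correct and is essentially the paper's own proof: conjugate by $(A\# B)^{-1/2}$, apply \eqref{li1}--\eqref{li2}, use the lower branch of the reversed Clarkson--McCarthy inequality with constant $2^{p-1}$, and obtain the strictness claim from Theorem \ref{gstrict}. The only slip is the parenthetical in the $E_p$ case: you divide by $2$ (not by $2^{2-p}$) to pass from $2-2^{p-1}\delta_p(A\# B,I)^p\geq \delta_p(A,B)^p/2$ to the stated form.
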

\begin{proof}
We once again use the distance formulations and conjugation of Equations \eqref{li0}-\eqref{li2} and the Clarkson–McCarthy inequalities to generate the inequality \begin{align}
\delta_p(A,C)^p+\delta_p(B,C)^p&\geq ||\log(\tilde{A})+\log(\tilde{C})||_p^p+||\log(\tilde{A})-\log(\tilde{C})||_p^p\label{wh0} \\
&\geq 2^{p-1}\left(||\log(\tilde{A})||_p^p+||\log(\tilde{C})||_p^p	\right)\label{wh} \\
&=\frac{\delta_p(A,B)^p}{2}+2^{p-1}\delta_p(M,C)^p
\end{align}	
Once more, Lemma \ref{gstrict} tells us the $\Gamma$-commuting and commuting requirements.

\s We note that the constants are not ideal, as for $1\leq p \leq \frac{4}{3}$, in Line \eqref{wh} the stronger inequality to use would be \begin{align}
\text{[Line \eqref{wh0}]} &\geq \left(||\log(\tilde{A})||_p+||\log(\tilde{C})||_p \right)^p	+\left|||\log(\tilde{A})||_p-||\log(\tilde{C})||_p \right|^p\\
&=\left(\frac{1}{2}\delta_p(A,B)+\delta_p(A\# B,C) \right)^p	+\left|\frac{1}{2}\delta_p(A,B)-\delta_p(A\# B,C)  \right|^p
\end{align}	
This is the matrix form of Hanner's inequality, and is conjectured to hold for the full regime of $1\leq p\leq 2$, but has not yet been proven outside of the given range except in the $p=\frac{3}{2}$ case \cite{heinavaara2022planes}.
\end{proof}

\bibliographystyle{spmpsci}   
\bibliography{references} 
\end{document}